\newtheorem{thm}{Theorem}[section]
\newtheorem{lem}[thm]{Lemma}
\newtheorem{prop}[thm]{Proposition}
\theoremstyle{definition}
\newtheorem{dfn}[thm]{Definition}
\newtheorem{rem}[thm]{Remark}
\newtheorem{conv}[thm]{Convention}
\newtheorem{chunk}[thm]{\hspace*{-1.065ex}\bf}
\theoremstyle{remark}
\numberwithin{equation}{thm}
\def\ann{\operatorname{ann}}
\def\p{\mathfrak{p}}
\def\depth{\operatorname{depth}} 
\def\Ext{\operatorname{Ext}}
\def\Hom{\operatorname{Hom}}
\def\m{\mathfrak{m}}
\def\syz{\mathrm{\Omega}}
\def\cm{\operatorname{CM}}
\def\depth{\operatorname{depth}}
\def\e{\operatorname{\mathbb{E}}}
\def\Ext{\operatorname{Ext}}
\def \ca {\operatorname{ca}}
\def\Hom{\operatorname{Hom}}
\def \c {\mathcal C}
\def\m{\mathfrak{m}}
\def\mod{\operatorname{mod}}
\def\Mod{\operatorname{Mod}}
\def\p{\mathfrak{p}}
\def\supp{\operatorname{Supp}}
\def\syz{\mathrm{\Omega}}
\def\t{\operatorname{\mathbb{T}}}
\def\m{\mathfrak{m}}
\def\syz{\mathrm{\Omega}}
\def\Tor{\operatorname{Tor}}
\def\tr{\operatorname{Tr}}
\def\syz{\Omega}
\def\tr{\operatorname{tr}} 
\def \Tr {\operatorname{Tr}}
\def\X{\mathcal{X}}
\def\Y{\mathcal{Y}}
\def\e{\operatorname{\mathbb{E}}} 
\def\t{\operatorname{\mathbb{T}}}
\begin{document}
\title{Trace ideal and annihilator of Ext and Tor of regular fractional ideals, and some applications}      

\author{Souvik Dey}
\address{Souvik Dey\\ Department of Mathematics \\ University of Kansas\\405 Snow Hall, 1460 Jayhawk Blvd.\\ Lawrence, KS 66045, U.S.A.}
\email{souvik@ku.edu}

\thanks{2020 {\em Mathematics Subject Classification.} 13B30, 13C60, 13D07}
\thanks{{{\em Key words and phrases.} annihilator, $\Ext$, $\Tor$, Cohen--Macaulay ring, Gorenstein ring, trace ideal}}   
\begin{abstract} Given a commutative Noetherian ring $R$ with total ring of fractions $Q(R)$, and a finitely generated $R$-submodule $M$ of $Q(R)$, we prove an equality between trace ideal, and certain annihilator of Ext and Tor of $M$. As a consequence, we answer in one-dimensional local analytically unramified case, a question raised by the present author and R. Takahashi. As another application, we give an alternative proof of a recent result of Ö. Esentepe that for one-dimensional analytically unramified Gorenstein local rings, the cohomology annihilator of Iyengar and Takahashi coincides with the conductor ideal. 
\end{abstract} 
\maketitle

\section{Introduction}

Let $R$ be a commutative Noetherian ring with unity, with total ring of fractions $Q(R)$. Let $\Mod R$ and $\mod R$ denote the category of all $R$-modules and all finitely generated $R$-modules respectively. 

Annihilators of Ext and Tor modules of certain subcategories, and their connections to generation of module categories and the singular locus of the ring, have recently caught a lot of attention. We draw the reader's attention to \cite{iy}, \cite{dim}, \cite{bl} among many instances in the literature.  

In this article, we prove a connection between annihilators of Ext and Tor and trace ideal (see Definition \ref{trd}) of a finitely generated $R$-module $M$ where $M$ is an $R$-submodule of $Q(R)$ containing a non-zero-divisor of $R$. Namely, we prove the following main result (see Theorem \ref{traceann}), where $\tr_R(M)$ denotes trace ideal. 

\begin{thm}\label{mai} Let $M$ be a finitely generated $R$-submodule of $Q(R)$ containing a non-zero-divisor of $R$. Let $\syz_R M$ be the first syzygy in some projective resolution of $M$. Then, it holds that

\begin{align*} \tr_R(M)=\bigcap_{i>0, N\in \mod R} \ann_R \Tor^R_i(M,N)=\bigcap_{i>0, N\in \Mod R} \ann_R \Ext^i_R(M,N)=\ann_R\Ext^1_R(M,\syz_R M) 
\end{align*}  

\end{thm}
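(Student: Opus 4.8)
The plan is to prove that all four ideals equal $\tr_R(M)$ by a chain of inclusions on each of the two sides ($\Tor$ and $\Ext$). Everything rests on one elementary remark that uses crucially $M\subseteq Q(R)$: since $M$ contains a non-zero-divisor, $\Hom_R(M,R)$ and $\Hom_R(M,M)$ are canonically identified with the fractional ideals $(R:_{Q(R)}M)$ and $(M:_{Q(R)}M)$, every such homomorphism is multiplication by an element of $Q(R)$, multiplications in $Q(R)$ commute, and $\tr_R(M)=M\cdot(R:_{Q(R)}M)$. From these facts one obtains, for $a\in R$,
\[
a\in\tr_R(M)\ \Longleftrightarrow\ a\cdot\id_M\colon M\to M\ \text{factors through a finitely generated free }R\text{-module}.
\]
Indeed, writing $a=\sum_j \bar q_j m_j$ with $\bar q_j\in(R:_{Q(R)}M)$ and $m_j\in M$ gives $a\cdot\id_M=\beta\alpha$ with $\alpha=(\bar q_1,\dots,\bar q_n)\colon M\to R^n$ and $\beta(r_1,\dots,r_n)=\sum_j r_j m_j$; conversely, a factorization $a\cdot\id_M=\beta\alpha$ through $R^n$ forces $a\cdot c=c\cdot\sum_j\bar\alpha_j m_j$ for a non-zero-divisor $c\in M$, whence $a=\sum_j\bar\alpha_j m_j\in M\cdot(R:_{Q(R)}M)=\tr_R(M)$.

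Granting this, the inclusions $\tr_R(M)\subseteq\bigcap_{i>0,\,N\in\mod R}\ann_R\Tor^R_i(M,N)$ and $\tr_R(M)\subseteq\bigcap_{i>0,\,N\in\Mod R}\ann_R\Ext^i_R(M,N)$ are immediate: a homothety on $M$ factoring through a free module $F$ induces, on each $\Tor^R_i(M,N)$ and $\Ext^i_R(M,N)$ with $i>0$, a map factoring through $\Tor^R_i(F,N)=0=\Ext^i_R(F,N)$. Also $\bigcap_{i>0,\,N\in\Mod R}\ann_R\Ext^i_R(M,N)\subseteq\ann_R\Ext^1_R(M,\syz_R M)$ trivially. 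So there remain the two ``reverse'' inclusions $\ann_R\Ext^1_R(M,\syz_R M)\subseteq\tr_R(M)$ and $\bigcap_{i>0,\,N\in\mod R}\ann_R\Tor^R_i(M,N)\subseteq\tr_R(M)$.

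For the $\Ext$ one, fix $0\to\syz_R M\xrightarrow{\iota}P_0\xrightarrow{\pi}M\to0$ with $P_0$ finitely generated projective. If $a$ kills the class of this extension in $\Ext^1_R(M,\syz_R M)$, its pushout along $a\cdot\id_{\syz_R M}$ splits, so there is $\theta\colon P_0\to\syz_R M$ with $\theta\iota=a\cdot\id_{\syz_R M}$. Then $a\cdot\id_{P_0}-\iota\theta$ annihilates $\image\iota=\ker\pi$, hence factors as $\rho\pi$ for some $\rho\colon M\to P_0$; composing on the left with $\pi$ and cancelling the epimorphism $\pi$ gives $\pi\rho=a\cdot\id_M$. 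Thus $a\cdot\id_M$ factors through $P_0$, hence through a finitely generated free module, and the remark yields $a\in\tr_R(M)$.

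For the $\Tor$ inclusion it suffices to exhibit a single finitely generated $N_0$ with $\ann_R\Tor^R_1(M,N_0)\subseteq\tr_R(M)$, since the whole intersection then lies inside it. I would take $N_0=\Tr_R M$, an Auslander transpose of $M$ (finitely generated, with $\Tor^R_1(M,\Tr_R M)$ independent of the chosen finite presentation). Using $\Tor^R_1(M,\Tr_R M)\cong\Tor^R_1(\Tr_R M,M)$ together with the Auslander--Bridger exact sequence $0\to\Tor^R_2(\Tr_R M,N)\to\Hom_R(M,R)\otimes_R N\xrightarrow{\omega_{M,N}}\Hom_R(M,N)\to\Tor^R_1(\Tr_R M,N)\to0$ specialized to $N=M$, one gets $\Tor^R_1(M,\Tr_R M)\cong\coker\omega_{M,M}$, where $\omega_{M,M}$ sends $q\otimes m$ to the map $x\mapsto q(x)m$. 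By commutativity of $Q(R)$ this is multiplication by $q(m)\in\tr_R(M)$, so the image of $\omega_{M,M}$ inside $\End_R(M)$ is exactly the set of homotheties by elements of $\tr_R(M)$. Hence if $a\in\ann_R\Tor^R_1(M,\Tr_R M)$ then $a$ kills the class of $\id_M$ in $\coker\omega_{M,M}$, that is, $a\cdot\id_M$ is multiplication by some $s\in\tr_R(M)$, and cancelling a non-zero-divisor of $M$ gives $a=s\in\tr_R(M)$. Chaining the inclusions then yields $\tr_R(M)=\bigcap_{i>0,\,N\in\mod R}\ann_R\Tor^R_i(M,N)$ and $\tr_R(M)=\bigcap_{i>0,\,N\in\Mod R}\ann_R\Ext^i_R(M,N)=\ann_R\Ext^1_R(M,\syz_R M)$. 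The step I expect to demand the most care is this last $\Tor$ inclusion — correctly invoking (or re-deriving) the Auslander--Bridger sequence and pinning down the image of $\omega_{M,M}$; by contrast the $\Ext$ inclusion is a short diagram chase once the pushout is split.
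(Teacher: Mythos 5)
Your proposal is correct and follows essentially the same route as the paper: the central identification of $\tr_R(M)$ with the set of $a\in R$ whose homothety on $M$ factors through a finitely generated free module (via $\Hom_R(M,R)\cong(R:M)$ and cancelling a non-zero-divisor), the connecting/pushout argument showing $\ann_R\Ext^1_R(M,\syz_R M)$ lands in that set, and the Auslander transpose with $\Tor_1^R(\Tr M,M)\cong\coker\omega_{M,M}\cong\underline{\Hom}_R(M,M)$ for the Tor side are exactly the ingredients of the paper's Lemma 3.1 and Theorem 3.3. The only differences are organizational (pushout phrasing instead of the long exact sequence, and pinning down $\image\omega_{M,M}$ directly as the $\tr_R(M)$-homotheties), not mathematical.
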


Using this main Theorem, we give the following affirmative answer to \cite[Question 4.7]{dt} when $R$ is a one-dimensional local ring with reduced completion (see Proposition \ref{41}).  

\begin{thm} Let $R$ be a one-dimensional local ring with reduced completion. Let $\c(R)$ denote the conductor of $R$ in $\overline R$. Then, $$\c(R)=\bigcap_{i>0, M,N\in \cm_0(R)} \ann_R \Tor^R_i(M,N)=\bigcap_{i>0, M,N\in \cm_0(R)} \ann_R \Ext^i_R(M,N)=\bigcap_{i>1, M,N\in \cm_0(R)}  \ann_R  \Ext^i_R(M,N)$$ 
\end{thm}    

Here, $\cm_0(R)$ is the subcategory of all maximal Cohen--Macaulay modules whose localiztions at all non-maximal prime ideals are free.  
We note here that \cite[Question 4.7]{dt} was motivated by the question of whether we have $(c)\implies (b)$ in \cite[Theorem 1.1(1)]{dim} (also see \cite[Theorem 1.1]{dt} in this regard).  

Using the above two results, we also give a quick alternative proof of one of the main results of \cite{curve}, namely \cite[Theorem 4.3 and 5.10]{curve}, that for  one-dimensional local Gorenstein rings with reduced completion, the cohomology annihilator $\bigcup_{n\ge 1} \bigcap_{i\ge n, M,N\in \mod R} \Ext^i_R(M,N)$ coincides with the conductor ideal, see Proposition \ref{cacon}.   

The organization of the paper is as follows: In Section 2, we state definitions and properties of fundamental notions used in this paper. In Section 3, we prove our main result, namely Theorem \ref{mai}. In Section 4, we give applications of our main result. 

\section{Preliminaries} 

In this section, we lay out the basic conventions, definitions, and preliminary discussions which will be used throughout the rest of the paper. 

\begin{conv} Throughout, $R$ will denote a commutative Noetherian ring with unity, with total ring of fractions $Q(R)$. Let $\Mod R$ denote the category of all $R$-modules. All subcategories of $\Mod R$ are assumed to be strict (closed under isomorphism) and full and are assumed to contain the zero module. So in particular, our subcategories are only determined by the collection of objects (modules) only. By $\mod R$, we will denote the subcategory of $\Mod R$ consisting of all finitely generated modules. We sometimes may abbreviate "finitely generated module" as "finite module" only. When $M$ is a finitely generated $R$-module, we denote by $\syz^n_R M$ the $n$-th syzygy in some projective resolution of $M$. When $R$ is moreover local, we take $\syz^n_R M$ to be the $n$-th syzygy in the minimal free resolution of $M$ by finitely generated free $R$-modules.

We also denote by by $\cm(R)$ the subcategory of $\mod R$ consisting of maximal Cohen--Macaulay modules (recall that an $R$-module $M$ is called {\em maximal Cohen--Macaulay} if $\depth_{R_\p}M_\p=\dim R_\p$ for all $\p\in\supp_RM$). We say that $R$ is Cohen--Macaulay if $R\in \cm(R)$. When $(R,\m)$ is local, by $\cm_0(R)$, we denote the subcategory of all modules in $\cm(R)$ whose localizations at all non-maximal prime ideals are free. 
\end{conv}   

\begin{dfn}
Let $\X,\Y$ be subcategories of $\Mod R$, and let $n\ge0$ be an integer.  

Adopting the notation of \cite[Definition 3.1]{dt}, we define the ideals $\t_n(\X,\Y)$ and $\e^n(\X,\Y)$ of $R$ by
\begin{align*}
\t_n(\X,\Y)&=\bigcap_{i>n}\bigcap_{X\in\X}\bigcap_{Y\in\Y}\ann_R\Tor_i^R(X,Y),\\
\e^n(\X,\Y)&=\bigcap_{i>n}\bigcap_{X\in\X}\bigcap_{Y\in\Y}\ann_R\Ext_R^i(X,Y).
\end{align*}
We put $\t_n(\X)=\t_n(\X,\X)$ and $\e^n(\X)=\e^n(\X,\X)$.
\end{dfn} 

\begin{dfn} Following \cite[Definition 2.1]{iy}, we define for any integer $n\ge 1$, the ideal   $$\ca^n(R):=\bigcap_{X,Y\in \mod R} \bigcap_{ i\ge n} \ann_R \Ext^i_R(X,Y) $$ we also put $\ca(R):=\bigcup_{n\ge 1} \ca^n(R)$. It is clear that $\ca^{n}(R)=\e^{n-1}(\mod R,\mod R)$. Moreover, it is also clear that $\ca^n(R)\subseteq \ca^{n+1}(R)$ for all $n\ge 1$, hence this chain of ideals stabilizes since $R$ is Noetherian, so we get $\ca(R)=\ca^s(R)$ for all big enough $s>0$. 
\end{dfn}

\begin{chunk}\label{conductor} Let $Q(R)$ be the total ring of fractions of $R$, and $\overline R$ be the integral closure of $R$ in $Q(R)$. We call finitely generated $R$-submodules of $Q(R)$ to be fractional ideals, and those fractional ideals which also contain a non-zero-divisor of $R$ are called regular fractional ideals.   

For $R$-submodules (not necessarily finitely generated) $M,N$ of $Q(R)$, we denote by $(N:M)$ the $R$-submodule of $Q(R)$ defined as $(N:M) :=\{x\in Q(R): xM\subseteq N\}$.
If $M,N$ are $R$-submodules of $Q(R)$ and $M$ contains a non-zero-divisor of $R$, then $(N:M)\cong \Hom_R(M,N)$, see \cite[Proposition 2.4(1)]{trace}. We will use this identification freely throughout the article without possible further reference.    
We also put $\c(R):=(R:\overline R)$, and we call it the conductor of $R$. We note that $\c(R)$ is an ideal of both $R$ and $\overline R$: indeed, since $1\in \overline R$, so $\c(R)\subseteq R$, hence $\c(R)$ is an $R$-submodule of $R$, so an ideal of $R$; and similarly $\c(R)\overline R\cdot\overline R=\c(R)\overline R\subseteq R$, so $\c(R)\overline R\subseteq (R:\overline R)=\c(R)$, hence $\c(R)$ is an ideal of $\overline R$. Moreover, if $I$ is an ideal of both $R$ and $\overline R$, then $I\subseteq \c(R)$: indeed, if $I$ is an ideal of $R$ and $\overline R$, then $I\overline R\subseteq I\subseteq R$, so $I\subseteq (R:\overline R)=\c(R)$. It is clear that $\c(R)$ contains a non-zero-divisor if and only if $\overline R$ is module finite over $R$ (for example, when $R$ is a local ring whose completion is reduced, see \cite[Theorem 4.6(i)]{lw}).  
\end{chunk}    

\begin{dfn}\label{trd} For an $R$-module $M$, the trace ideal of $M$ is $\tr_R(M)=\sum_{f\in \Hom_R(M,R)}\text{Im}(f)$, which is the ideal of $R$ generated by all homomorphic images of $M$ into $R$.
\end{dfn}   

\begin{chunk}\label{trcon} If $M$ is an $R$-submodule (not necessarily finitely generated) of $Q(R)$ containing a non-zero-divisor of $R$, then it follows that $\tr_R(M)=(R:M)M$, see \cite[Proposition 2.4(2)]{trace}. So in particular, $\tr_R(\overline R)=(R:\overline R)\overline R=(R:\overline R)=\c(R)$. Hence, $\tr_R(\c(R))=\c(R)$ (see \cite[Proposition 2.8(iv)]{lindo}, for this part of the result of \cite[Proposition 2.8]{lindo}, one does not need $M$ to be finitely presented). We will use this fact $\tr_R(\c(R))=\c(R)$ throughout the rest of the paper without possible further reference. 
\end{chunk}   

\begin{dfn} Given a subcategory $\X$ of $\mod R$, and integer $n\ge 1$, by $\syz^n \X$ we denote the collection of all modules for which there exists an exact sequence of the form $0\to M \to P_{n-1} \to \cdots \to P_0 \to N\to 0$ for some $N\in \X$ and some finitely generated projective $R$-module $P_0,...,P_{n-1}$.    
\end{dfn}

\begin{chunk} Note that if $M\in \mod R$, then $M^*\in \syz^2 \mod R$. If $R$ is Cohen--Macaulay, then it is clear that $\syz^n \cm(R) \subseteq \cm(R)$ for all $n\ge 1$. If $R$ is Cohen--Macaulay of dimension $d$, then $\syz^n \mod R\subseteq \cm(R)$ for all $n\ge d$.      
\end{chunk}

\begin{dfn}  For a finitely generated $R$-module $M$ we denote by $\Tr M$ the {\em (Auslander) transpose} of $M$.
This is defined as follows.
Take a projective presentation $P_1\xrightarrow{f}P_0\to M\to 0$ by finitely generated projective modules $P_1,P_0$.
Dualizing this by $R$, we get an exact sequence $0\to M^\ast\to P_0^\ast\xrightarrow{f^\ast}P_1^*\to\Tr M\to0$, that is, $\Tr M$ is the cokernel of the map $f^\ast$. It is clear that $\Tr M$ is also finitely generated.   
The transpose of $M$ is uniquely determined up to projective summands; see \cite{AB} for basic properties.
\end{dfn}    

\section{Main result}

In this section, we prove our main result. For this, we first need the following Lemma, part of which was essentially shown in \cite[Lemma 2.14]{iy}.   

\begin{lem}\label{lemm} Let $M$ be a finitely generated module over $R$. Let $\syz_R M$ be the first syzygy in some projective resolution of $M$. Then, we have equalities $\t_0(M,\Mod R)=\ann_R \Tor^R_1(M,\Tr M)=\e^0(M,\Mod R)=\ann_R\Ext^1_R(M,\syz_R M)=\{x\in R:  M\xrightarrow{\cdot x} M \text{ factors through some finitely generated free } R\text{-module }\}$. 
\end{lem}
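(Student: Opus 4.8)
The plan is to prove the five sets in Lemma \ref{lemm} are equal by establishing a cycle of inclusions, using the "factors through free" condition as the central pivot. Let me denote by $S$ the set $\{x\in R : M\xrightarrow{\cdot x}M \text{ factors through a finitely generated free module}\}$, which is readily checked to be an ideal of $R$ (it is closed under $R$-multiplication and addition, since a sum of two maps factoring through free modules factors through their direct sum). I would aim to show
$$ \t_0(M,\Mod R)\subseteq \ann_R\Tor^R_1(M,\Tr M)\subseteq S\subseteq \ann_R\Ext^1_R(M,\syz_RM)\subseteq \e^0(M,\Mod R)\subseteq \t_0(M,\Mod R),$$
with the key input being the standard Auslander--Bridger identities $\Tor^R_1(M,N)\cong \Ext^1_R(\Tr M, R)\otimes$-type relations — more precisely, for any $R$-module $N$ there is an exact sequence relating $\Tor^R_1(\Tr M, N)$ and $\Ext^0$, and dually $\Ext^1_R(M,N)$ fits into an exact sequence involving $\Tr M\otimes N$.

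First I would recall from \cite{AB} the fundamental exact sequence: given a projective presentation $P_1\xrightarrow{f}P_0\to M\to 0$ with $\syz_RM=\image(f)$ (choosing the presentation compatibly), dualizing gives $\Tr M=\coker f^\ast$, and there are natural exact sequences
$$ \Tor^R_2(\Tr M,N)\to M^\ast\otimes_R N\to \Hom_R(M,N)\to \Tor^R_1(\Tr M,N)\to 0$$
and
$$ 0\to \Ext^1_R(\Tr M,R)\otimes\cdots,$$
but the cleanest route is: a homothety $\cdot x\colon M\to M$ factors through a free module if and only if $x$ annihilates a certain cohomology/homology group. Concretely, I would use that $\cdot x$ on $M$ factors through a free module iff $\cdot x$ is zero in $\lend_R(M)$ (the stable endomorphism ring relative to free modules), and then invoke the isomorphisms $\lend_R(M)\cong \Ext^1_R(M,\syz_RM)$-submodule-type statements and $\lend_R(M)\cong \Tor^R_1(M,\Tr M)$-type statements that appear in \cite[Lemma 2.14]{iy} and classical transpose theory. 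For the inclusion $S\subseteq \ann_R\Ext^1_R(M,\syz_RM)$: if $\cdot x\colon M\to M$ factors as $M\to F\to M$ with $F$ free, then the induced map on $\Ext^1_R(M,\syz_RM)$ factors through $\Ext^1_R(F,\syz_RM)=0$, so $x$ kills it. For $\ann_R\Ext^1_R(M,\syz_RM)\subseteq \e^0(M,\Mod R)$: dimension shifting gives $\Ext^{i}_R(M,N)\cong \Ext^1_R(M,\syz^{i-1}_R N\text{-shift})$... actually more directly, $\Ext^{i+1}_R(M,N)\cong \Ext^i_R(\syz_RM,N)$, and one shows the annihilator of $\Ext^1_R(M,\syz_RM)$ already annihilates all higher $\Ext^i_R(M,N)$ by the standard argument that $\Ext^1_R(M,\syz_RM)$ "generates" the others via the connecting maps associated to $0\to\syz_RM\to P_0\to M\to 0$; this is precisely the content extracted from \cite[Lemma 2.14]{iy}.

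For the Tor side, $\e^0(M,\Mod R)\subseteq\t_0(M,\Mod R)$ and the reverse: I would exploit that $\Ext^1_R(M,\syz_RM)$ being annihilated by $x$ forces, via $\Hom_R(-,\syz_RM)$ applied to $0\to\syz_RM\to P_0\to M\to 0$, the homothety $\cdot x$ on $M$ to lift to $P_0$, i.e. factor through $P_0$ — giving $\ann_R\Ext^1_R(M,\syz_RM)\subseteq S$, closing one half of the loop. The passage between $S$ and $\ann_R\Tor^R_1(M,\Tr M)$ uses that $\Tor^R_1(M,\Tr M)\cong \Tor^R_1(\Tr M, M)$ and the transpose long exact sequence, identifying this Tor with $\lend_R(M)$ modulo the image of $M^\ast\otimes M$; an element $x$ lies in its annihilator iff $\cdot x$ is in that image iff $\cdot x$ factors through a free module. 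Finally $\t_0(M,\Mod R)\subseteq \ann_R\Tor^R_1(M,\Tr M)$ is immediate since $\Tr M\in\mod R\subseteq \Mod R$... wait, $\Tor$ here has $M$ in the first slot and $N\in\Mod R$ in the second, so taking $N=\Tr M$ gives the inclusion directly; the nontrivial direction is the reverse.

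The main obstacle I anticipate is bookkeeping the transpose exact sequences with the correct variance and making sure the "factors through free" characterization is matched to the right $\Tor$/$\Ext$ group with a canonical (not merely abstract) isomorphism — in particular verifying that the element $x\in R$, viewed as a homothety, corresponds under these isomorphisms to the right cohomology class, so that "$x$ annihilates the class" $\iff$ "$\cdot x$ factors through a free module." Once the dictionary between $\lend_R(M)$, $\Ext^1_R(M,\syz_RM)$, and $\Tor^R_1(M,\Tr M)$ is set up carefully (citing \cite{AB} and \cite[Lemma 2.14]{iy}), the inclusions are each short. The only genuinely new piece beyond \cite{iy} is threading $\Mod R$ (rather than $\mod R$) through the $\Tor$ and $\Ext$ intersections, which works because the characterization via $S$ makes no finiteness assumption on the second argument once $M$ is fixed and finitely generated.
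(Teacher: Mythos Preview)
Your proposal is correct and uses the same essential ingredients as the paper: the identification $\Tor^R_1(M,\Tr M)\cong \lhom_R(M,M)$ (\cite[Lemma (3.9)]{Y}) to pass from the $\Tor$-annihilator to the ``factors through free'' set $S$, the lifting argument from $\Hom_R(M,-)$ applied to $0\to\syz_RM\to P_0\to M\to 0$ to pass from $\ann_R\Ext^1_R(M,\syz_RM)$ back to $S$, and functoriality of $\Ext^i_R(-,N)$ and $\Tor^R_i(-,N)$ to show $S$ kills all higher $\Ext$ and $\Tor$.

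The only issue is organizational. Your linear chain
\[
\t_0\subseteq \ann_R\Tor^R_1(M,\Tr M)\subseteq S\subseteq \ann_R\Ext^1_R(M,\syz_RM)\subseteq \e^0\subseteq \t_0
\]
does not close without backtracking: the last two inclusions each force you to return through $S$ (as you yourself notice, the step $\ann_R\Ext^1_R(M,\syz_RM)\subseteq\e^0$ is really $\ann_R\Ext^1\subseteq S\subseteq\e^0$, and likewise $\e^0\subseteq\t_0$ goes via $\e^0\subseteq\ann_R\Ext^1\subseteq S\subseteq\t_0$). The paper avoids this by running the cycle $(1)\subseteq(2)\subseteq(3)\subseteq(4)\subseteq(1)$ among the four $\Ext/\Tor$ ideals first, with both nontrivial steps $(2)\subseteq(3)$ and $(4)\subseteq(1)$ passing through ``factors through a projective,'' and only afterward slotting in the set $S$ via $(3)\subseteq(5)\subseteq(1)$ using a finitely generated free presentation. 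This is tidier, but your plan contains every argument needed and would produce a correct proof once the chain is reorganized.
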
 

\begin{proof} Let us call the sets (1), (2), (3), (4) and (5) in order. Clearly, $(1)\subseteq (2)$ and $(3)\subseteq (4)$.

We first prove $(2)\subseteq (3)$: By \cite[Lemma (3.9)]{Y}, we have isomorphism $\Tor^R_1(M,\Tr M)\cong \underline{\Hom}_R(M,M)$. So, if $x\in \ann_R\Tor^R_1(M,\Tr M)=\ann_R \underline{\Hom}_R(M,M)$, then since $\text{id}_M \in \Hom_R(M,M)$, so the map $x\cdot \text{id}_M: M \to M$ factors through a projective $R$-module i.e. there is a commutative diagram 
\begin{tikzcd}
M \arrow[rd] \arrow[rr, "\cdot x"] &              & M \\
                                   & P \arrow[ru] &  
\end{tikzcd}  Hence for every $R$-module $N$ and integer $i>0$, we get an induced commutative diagram by  linearity of the functor $\Ext^i_R(-,N)$  
\begin{tikzcd}
{\Ext^i_R(M,N)} &                              & {\Ext^i_R(M,N)} \arrow[ld] \arrow[ll, "\cdot x"'] \\
                & {\Ext^i_R(P,N)=0} \arrow[lu] &                                                  
\end{tikzcd} Hence $x\cdot \Ext^i_R(M,N)=0$ i.e. $x\in \ann_R \Ext^i_R(M,N)$. Since $i>0$ and $N$ were arbitrary, we get $x\in \e^0(M,\Mod R)$. This proves $(2)\subseteq (3)$. 

Next we prove $(4)\subseteq (1)$: By hypothesis, we have an exact sequence $\sigma: 0\to \syz_R M \to P \xrightarrow{f} M \to 0$ for some projective $R$-module $P$. Applying $\Hom_R(M,-)$ we get exact sequence $$0\to \Hom_R(M,\syz_R M)\to \Hom_R(M,P)\xrightarrow{\phi\mapsto f\circ \phi} \Hom_R(M,M)\xrightarrow{g} \Ext^i_R(M,\syz_R M)$$ , where $g(\text{id}_M)=\sigma$.  Now if $x\in \ann_R\Ext^i_R(M,\syz_R M)$, then $0=x\sigma=g(x\cdot \text{id}_M)$, so 

$x\cdot \text{id}_M\in \ker g=\text{Im}(\Hom_R(M,P)\xrightarrow{\phi\mapsto f\circ \phi} \Hom_R(M,M))$. So, there exists $\phi\in \Hom_R(M,P)$ such that we have a commutative diagram  
\begin{tikzcd}
M \arrow[rd, "\phi"] \arrow[rr, "\cdot x"] &                   & M \\
                                           & P \arrow[ru, "f"] &  
\end{tikzcd} Hence for every $R$-module $N$ and integer $i>0$, we get an induced commutative diagram by  linearity of the functor $\Tor^R_i(-,N)$ 
\begin{tikzcd}
{\Tor^R_i(M,N)} \arrow[rd, "\phi"] \arrow[rr, "\cdot x"] &                                   & {\Tor^R_i(M,N)} \\
                                                         & {\Tor^R_i(P,N)=0} \arrow[ru, "f"] &                
\end{tikzcd}  

Hence $x\cdot \Tor^R_i(M,N)=0$ i.e. $x\in \ann_R \Tor^R_i(M,N)$. Since $i>0$ and $N$ were arbitrary, we get $x\in \t_0(M,\Mod R)$. 

So we have now seen $(1)\subseteq (2)\subseteq (3)\subseteq (4)\subseteq (1)$. So, $(1)=(2)=(3)=(4)$. Now clearly $(5)\subseteq (1)$ by same argument as above. 

To see $(3)\subseteq (5)$: First notice that since $M$ is finitely generated, so there exists a finitely generated free $R$-module $F$ and an $R$-module $X$ such that we have an exact sequence $0\to X \to F \xrightarrow{h} M \to 0$. If $x\in \ann_R \Ext^1_R(M,X)$, then by the similar argument as in the proof of $(4)\subseteq (1)$, we see that $M\xrightarrow{\cdot x}M$ factors through $F$. This shows $(3)\subseteq (5)$. 

This finally shows all the five sets are equal.  
\end{proof}

\begin{rem} Since for a finitely generated $R$-module $M$,  $\t_0(M,\Mod R)\subseteq \t_0(M,\mod R)\subseteq \ann_R\Tor^R_1(M,\Tr M)$, hence all the ideals in Lemma \ref{lemm} are also equal to $\t_0(M,\mod R)$.  
\end{rem}

Now we prove the main result of our paper 

\begin{thm}\label{traceann} Let $M$ be a finitely generated module over $R$. Let $\syz_R M$ be the first syzygy in some projective resolution of $M$.  If $M$ is an $R$-submodule of $Q(R)$ and $M$ contains a non-zero-divisor of $R$, then $\tr_R(M)=\t_0(M,\Mod R)=\ann_R \Tor^R_1(M,\Tr M)=\e^0(M,\Mod R)=\ann_R\Ext^1_R(M,\syz_R M)$. 
    
\end{thm}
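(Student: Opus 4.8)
The plan is to combine Lemma \ref{lemm}, which already identifies $\t_0(M,\Mod R)$, $\ann_R\Tor^R_1(M,\Tr M)$, $\e^0(M,\Mod R)$, and $\ann_R\Ext^1_R(M,\syz_R M)$ as a single common ideal, with the extra hypothesis that $M$ is a regular fractional ideal in order to show that this common ideal equals $\tr_R(M)$. Thus the only new content is the equality $\tr_R(M)=\t_0(M,\Mod R)$, and by Lemma \ref{lemm} it suffices to identify $\tr_R(M)$ with set $(5)$ there, namely $\{x\in R:\ M\xrightarrow{\cdot x}M\text{ factors through a finitely generated free }R\text{-module}\}$.

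First I would use the identification from \ref{trcon} that $\tr_R(M)=(R:M)M$, valid since $M$ contains a non-zero-divisor. So I want to show $(R:M)M=\{x\in R: \cdot x\colon M\to M\text{ factors through a finite free module}\}$. For the inclusion "$\subseteq$": take $x=\sum_j a_j m_j$ with $a_j\in(R:M)$ and $m_j\in M$. Each $a_j$ gives an $R$-linear map $M\xrightarrow{\cdot a_j}R$ (landing in $R$ precisely because $a_j\in(R:M)$), and multiplication by $m_j$ is a map $R\to M$. Assembling these over $j=1,\dots,t$ gives a factorization $M\to R^t\to M$ of multiplication by $x$ through the finite free module $R^t$; here one uses that $M$, sitting inside $Q(R)$, is a commutative ambient so all these multiplications commute. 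For the inclusion "$\supseteq$": suppose $\cdot x\colon M\to M$ factors as $M\xrightarrow{(\varphi_1,\dots,\varphi_t)}R^t\xrightarrow{(\psi_1,\dots,\psi_t)}M$ with $\varphi_j\in\Hom_R(M,R)$ and $\psi_j\in\Hom_R(R,M)$, so $\psi_j$ is multiplication by some $m_j=\psi_j(1)\in M$. Since $M$ contains a non-zero-divisor $u$ of $R$, each $\varphi_j$ is, after the identification $\Hom_R(M,R)\cong(R:M)$, multiplication by the element $a_j:=\varphi_j(u)/u\in(R:M)$ (this is the standard computation: for $m\in M$, $u\varphi_j(m)=\varphi_j(um)=m\varphi_j(u)$ in $Q(R)$, so $\varphi_j(m)=a_j m$). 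Then for any $m\in M$ we get $xm=\sum_j m_j a_j m$, and since $M$ contains a non-zero-divisor we may cancel to get $x=\sum_j a_j m_j\in(R:M)M=\tr_R(M)$.

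With that equality in hand, the theorem follows: Lemma \ref{lemm} gives $(5)=\t_0(M,\Mod R)=\ann_R\Tor^R_1(M,\Tr M)=\e^0(M,\Mod R)=\ann_R\Ext^1_R(M,\syz_R M)$, and the two inclusions above give $\tr_R(M)=(5)$, so all six ideals coincide.

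I expect the main subtlety to be the "$\supseteq$" direction, specifically handling the factorization through an arbitrary finite free module and extracting from each coordinate functional $\varphi_j\in\Hom_R(M,R)$ a genuine element $a_j\in(R:M)\subseteq Q(R)$ acting by multiplication; this is exactly where the hypotheses that $M\subseteq Q(R)$ and $M$ contains a non-zero-divisor are essential (to invoke $\Hom_R(M,R)\cong(R:M)$ from \ref{conductor} and to legitimately cancel the non-zero-divisor in $Q(R)$). Everything else is bookkeeping with the already-proved Lemma \ref{lemm} and the dictionary $\tr_R(M)=(R:M)M$ from \ref{trcon}.
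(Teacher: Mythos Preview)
Your proposal is correct and follows essentially the same route as the paper: reduce via Lemma \ref{lemm} to proving $\tr_R(M)=(R:M)M$ equals set $(5)$, handle ``$\subseteq$'' by exhibiting an explicit factorization through a free module using multiplication by elements of $(R:M)$ and of $M$, and handle ``$\supseteq$'' by decomposing a factorization through $R^n$ into coordinate maps, invoking $\Hom_R(M,R)\cong(R:M)$, and cancelling a non-zero-divisor in $Q(R)$. The only cosmetic differences are that the paper checks ``$\subseteq$'' on generators $xm$ (factoring through $R$ itself, using that $(5)$ is already known to be an ideal) rather than on sums, and that your parenthetical justification ``$\varphi_j(um)=m\varphi_j(u)$'' is not literally $R$-linearity since $m\notin R$ in general---but you are citing the identification from \ref{conductor} anyway, so this is harmless.
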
    

\begin{proof} By Lemma \ref{lemm}, it is enough to show that $$\tr_R(M)=\{x\in R:  M\xrightarrow{\cdot x} M \text{ factors through some finitely generated free } R\text{-module }\}$$  Call the right hand side set (which is an ideal by Lemma \ref{lemm}) to be $L$.  

Since $\tr(M)=(R:M)M$ by \cite[Proposition 2.4(2)]{trace}, and $L$ is an ideal, so to show $\tr_R(M)\subseteq L$, it is enough to show that for every $x\in (R:M), m\in M$, that $M\xrightarrow{\cdot xm}M$ factors through a finitely generated free $R$-module. And indeed, since $xM\subseteq R$, so the factoring is given by the commutative diagram 
\begin{tikzcd}
M \arrow[rd, "\cdot x"'] \arrow[rr, "\cdot xm"] &                    & M \\
                                    & R \arrow[ru, "\cdot m"'] &    
\end{tikzcd} 

To show the reverse inclusion $L\subseteq \tr_R(M)$, let $r\in L$ i.e. suppose $r\in R$ and we have a commutative diagram $$\begin{tikzcd}
M \arrow[rd, "g"'] \arrow[rr, "\cdot r"] &                    & M \\
                                    & R^{\oplus n} \arrow[ru, "f"'] &    
\end{tikzcd}$$ for some integer $n\ge 0$ and some $R$-linear maps $f,g$. Since $M$ contains a non-zero-divisor of $R$, so $rM\ne 0$, so $n> 0$. 
Let $\pi_i,j_i$ denote the $i$-th coordinate projection and inclusion maps from $R^{\oplus n} \to R$ and $R\to R^{\oplus n}$ respectively. Writing $g_i=\pi_i\circ g:M\to R$, and $f_i=f\circ j_i:R\to M$ respectively, we see that $g=(g_1,\cdots,g_n)$ and $f(r_1,\cdots,r_n)=\sum_{i=1}^n f_i(r_i)=\sum_{i=1}^nr_if_i(1), \forall (r_1,...,r_n)\in R^{\oplus n}$.  Since each $g_i\in \Hom_R(M,R)$, so by \cite[Proposition 2.4(1)]{trace}, there exists   $q_i\in (R:M)\subseteq Q(R)$ such that $g_i(x)=q_ix, \forall x\in M$.  Now let $b\in M \cap R$ be a non-zero-divisor. Then, due to the commutative diagram, we have 

$$rb=f(g(b))=f(g_1(b),\cdots,g_n(b))=f(q_1b,\cdots,q_nb)=\sum_{i=1}^n f_i(q_ib)=\sum_{i=1}^n q_ibf_i(1)=\left(\sum_{i=1}^n q_if_i(1)\right)b$$, where all these equalities takes place in $Q(R)$.  Since $b$ is a non-zero-divisor on $R$, so it is a non-zero-divisor on $Q(R)$, so $r=\sum_{i=1}^n q_if_i(1)\in (R:M)M$.

\end{proof}  

\section{some applications to annihilators of Ext and Tor of one-dimensional local Cohen--Macaulay rings} 

In this section, let $(R,\m)$ be a local Cohen--Macaulay ring of dimension $1$ with $\m$-adic completion $\widehat R$. Since $\c(R)=(R:\overline R)$ is an ideal of $R$ and $\overline R$ (see \ref{conductor}), so $\widehat{\c(R)}$ is an ideal of $\widehat R$ and $\widehat R\otimes_R \overline R=\overline{\widehat R}$ (see the discussion of \cite[Remark 4.8]{lw}). So, $\widehat{\c(R)}\subseteq \c(\widehat R)$ (see the discussion of \ref{conductor}).

As our first application of Theorem \ref{traceann}, we give an affirmative answer to \cite[Question 4.7]{dt}, relating to equality of annihilator of $\Tor$ and $\Ext$ of all modules in $\cm_0(R)$, when $R$ is a local ring of dimension $1$ whose completion is reduced (hence $R$ is Cohen--Macaulay). More precisely, we prove the following result:  

\begin{prop}\label{41} Let $(R,\m)$ be a local ring of dimension one whose completion is reduced, so that $\cm(R)=\cm_0(R)$ holds. Let $\X$ be a subcategory of $\Mod R$ containing $\syz^2\cm(R)$ and $\Y$ be a subcategory of $\Mod R$ containing $\cm(R)$. Then, it holds that $\e^0(\cm(R),\X)=\e^1(\cm(R),\X)=\t_0(\cm(R),\Y)=\c(R)$. In particular, it holds that $\e^0(\cm(R))=\e^1(\cm(R))=\t_0(\cm(R))=\c(R)$.  
\end{prop}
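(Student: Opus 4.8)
The plan is to reduce everything to Theorem \ref{traceann} applied to suitable regular fractional ideals, together with the standard fact that over a one-dimensional reduced-completion local ring the conductor $\c(R)$ is itself realized as a trace ideal (namely $\tr_R(\overline R) = \c(R)$, recorded in \ref{trcon}), and the observation that $\c(R)$ is fixed by taking traces, i.e. $\tr_R(\c(R)) = \c(R)$. First I would establish the chain of inclusions
\[
\e^0(\cm(R),\X) \subseteq \e^1(\cm(R),\X) \subseteq \t_0(\cm(R),\Y) \subseteq \c(R) \subseteq \e^0(\cm(R),\X),
\]
which forces all four ideals to be equal to $\c(R)$; the ``In particular'' statement then follows by taking $\X = \Y = \cm(R)$, using that $\syz^2\cm(R) \subseteq \cm(R)$ and $\cm(R)\subseteq \cm(R)$ under the Cohen--Macaulay hypothesis (recorded in the excerpt after the definition of $\syz^n\X$).

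For the easy inclusions: $\e^0(\cm(R),\X)\subseteq\e^1(\cm(R),\X)$ is immediate from the definition of $\e^n$ as an intersection over $i>n$. For $\e^1(\cm(R),\X)\subseteq\t_0(\cm(R),\Y)$, I would fix $M\in\cm(R)$ and $N\in\Y$; since $\dim R = 1$ and $R$ is Cohen--Macaulay, $\syz_R M\in\cm(R)\subseteq\syz^2\cm(R)\subseteq\X$ (here I use that a first syzygy of a maximal Cohen--Macaulay module over a one-dimensional CM ring is again maximal Cohen--Macaulay, and that $\cm(R)=\syz\cm(R)\subseteq\syz^2\cm(R)$ because every MCM module over such a ring is a syzygy), so $\Ext^1_R(M,\syz_R M)$ is among the modules being annihilated on the left side. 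By Lemma \ref{lemm}, $\ann_R\Ext^1_R(M,\syz_R M) = \t_0(M,\Mod R)\subseteq\t_0(M,\Y)$; intersecting over $M\in\cm(R)$ gives the inclusion. The point is just to feed the right second argument into $\Ext^1$ so that Lemma \ref{lemm} converts an $\Ext$-annihilator with $i=1$ into a $\Tor$-annihilator for all $i>0$. (I should double-check the fine print that $\syz_R M$ lands in $\X$; if $\X$ only contains $\syz^2\cm(R)$, then I want $\syz_R M\in\syz^2\cm(R)$, which holds since $\syz_R M$ is itself a second syzygy of some MCM module when every MCM is a first syzygy.)

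For $\t_0(\cm(R),\Y)\subseteq\c(R)$: since $\overline R$ is module-finite over $R$ (completion reduced, via \cite[Theorem 4.6]{lw}), and $\c(R) = \tr_R(\overline R)$ contains a non-zero-divisor, the fractional ideal $\c(R)$ itself is a finitely generated $R$-submodule of $Q(R)$ containing a non-zero-divisor, and it is maximal Cohen--Macaulay (it is a nonzero ideal of the one-dimensional CM ring $R$). So $\c(R)\in\cm(R) = \cm_0(R)$ and also, picking any $N\in\cm(R)\subseteq\Y$, we have $\t_0(\cm(R),\Y)\subseteq\t_0(\c(R),\Mod R) = \tr_R(\c(R)) = \c(R)$ by Theorem \ref{traceann} and \ref{trcon}. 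Finally, for $\c(R)\subseteq\e^0(\cm(R),\X)$: I must show $\c(R)$ annihilates $\Ext^i_R(M,N)$ for every $i>0$, every $M\in\cm(R)$, every $N\in\X$. Apply Theorem \ref{traceann} to the regular fractional ideal $\overline R$ (module-finite, contains $1$): it gives $\tr_R(\overline R) = \e^0(\overline R,\Mod R)$, i.e. $\c(R)$ annihilates $\Ext^i_R(\overline R, N)$ for all $i>0$ and all $N$. This handles $M = \overline R$ but not a general $M\in\cm(R)$; the remaining work is to propagate from $\overline R$ to all of $\cm(R)$, which is where the analytically unramified (reduced completion) hypothesis really does its job — one uses that $\c(R)\overline R\subseteq R$ to factor multiplication by elements of $\c(R)$ on an arbitrary MCM module through an $\overline R$-module, or equivalently that tensoring up to $\overline R$ is controlled on the singular locus, which is concentrated at $\m$. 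Concretely, for $c\in\c(R)$ and $M\in\cm(R)$, multiplication by $c$ on $M$ factors as $M\to\overline R\otimes_R M\to M$ (the composite being $c\cdot\id_M$ since $c\overline R\subseteq R$ acts by multiplication), and since $\overline R\otimes_R M$ is an $\overline R$-module with $\overline R$ regular in codimension... hmm, more robustly: $c\cdot\id_M$ factors through a module of the form $\c(R)\otimes_R M$ or through $\Hom_R(\c(R),M)$-type arguments; the cleanest route is to note $c M\subseteq \c(R)M$ and build the factorization through a free module using that $\c(R)$ is a trace ideal whose generators map $M$ into $R$-free pieces.

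The main obstacle, and the step deserving the most care, is precisely this last inclusion $\c(R)\subseteq\e^0(\cm(R),\X)$ for \emph{arbitrary} $M\in\cm(R)$ rather than just for $M=\overline R$: Theorem \ref{traceann} directly yields only the case $M=\overline R$, so one must either argue that $\e^0(\cm(R),\X)\supseteq\e^0(\overline R,\X)$ fails to help and instead show that multiplication by any $c\in\c(R)$ on any MCM $M$ factors through a finitely generated free module (then invoke Lemma \ref{lemm}/the factorization characterization to kill all higher $\Ext$), or else run the whole argument with $M$ replaced by the fractional-ideal structure of $M$. I expect the resolution is: every $M\in\cm_0(R)$ can be written, up to a module of finite length contribution that is irrelevant after multiplying by $\c(R)$, in a form where $\c(R)$ annihilates the relevant cohomology — essentially because $\cm_0(R)$ modules are locally free away from $\m$ and $\c(R)$ cuts out $\m$ set-theoretically inside $\overline R$. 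Making that precise, probably via the factorization $M\xrightarrow{\ \mu\ }\overline R\otimes_R M\xrightarrow{\ \nu\ }M$ with $\nu\mu = c\cdot\id_M$ and then noting $\overline R\otimes_R M$ has finite projective dimension issues replaced by the fact that $\Ext^{>0}_R(\overline R\otimes_R M, N)$ is killed by $\c(R)$ via the $M=\overline R$ case tensored appropriately — is the crux, and I would spend the bulk of the proof-writing budget there.
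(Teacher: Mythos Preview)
Your proposal has genuine gaps in several places, and the overall chain of inclusions you set up does not quite work.

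First, the step $\e^1(\cm(R),\X)\subseteq\t_0(\cm(R),\Y)$ is not established. You claim that for $M\in\cm(R)$ the module $\Ext^1_R(M,\syz_R M)$ is ``among the modules being annihilated'' by $\e^1(\cm(R),\X)$, but $\e^1$ only intersects over $i>1$, so $\Ext^1$ is not directly covered. To fix this you appeal to $\syz_R M\in\syz^2\cm(R)$, which in turn rests on the assertion $\cm(R)=\syz\cm(R)$; that equality requires every MCM module to be a first syzygy of an MCM module, which holds over Gorenstein rings but is not clear here (the proposition does not assume $R$ Gorenstein). The paper sidesteps this entirely: it never proves $\e^1\subseteq\t_0$ directly, but shows $\e^1(\cm(R),\X)\subseteq\c(R)$ and $\t_0(\cm(R),\Y)\subseteq\c(R)$ separately, exploiting the specific fact that $\c(R)\cong(\overline R)^*$ is already a second syzygy, so one may write $\c(R)\cong F\oplus\syz_R N$ with $N\in\cm(R)$ and dimension-shift so that the relevant $\Ext$ and $\Tor$ groups land in the correct index range with both arguments in the prescribed subcategories.

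Second, your inclusion $\t_0(\cm(R),\Y)\subseteq\t_0(\c(R),\Mod R)$ goes the wrong way in the second variable: from $\c(R)\in\cm(R)$ you get $\t_0(\cm(R),\Y)\subseteq\t_0(\c(R),\Y)$, but since $\Y\subseteq\Mod R$ one has $\t_0(\c(R),\Y)\supseteq\t_0(\c(R),\Mod R)$, not $\subseteq$. To make the argument work you would need $\Tr\c(R)\in\Y$, which is not obvious; the paper instead uses $\syz_R\Tr\c(R)\in\cm(R)\subseteq\Y$ together with the above syzygy decomposition of $\c(R)$ to dimension-shift.

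Third, and most importantly, the inclusion $\c(R)\subseteq\e^0(\cm(R),\X)$ is left as a sketch. You correctly flag it as the crux, and your idea of factoring multiplication by $c\in\c(R)$ on an arbitrary MCM module $M$ through $\overline R\otimes_R M$ is promising, but you do not carry it to a conclusion (and the projective-dimension considerations you allude to do not obviously close the argument). The paper handles this step by passing to the completion $\widehat R$, which is reduced by hypothesis, invoking \cite[Proposition~3.1]{wa} to get $\c(\widehat R)\Ext^i_{\widehat R}(\widehat M,\widehat N)=0$ for all $i\ge1$ and $M\in\cm(R)$, $N\in\mod R$, and then descending via $\widehat{\c(R)}\subseteq\c(\widehat R)$ and faithful flatness of completion. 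Once $\c(R)\subseteq\ann_R\Ext^1_R(M,\syz_R M)$ is known for every $M\in\cm(R)$, Lemma~\ref{lemm} gives $\c(R)\subseteq\e^0(M,\Mod R)\cap\t_0(M,\Mod R)$ simultaneously.
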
  

\begin{proof} Since $\widehat R$ is reduced, so $R$ is reduced, so $R$ is Cohen--Macaulay with an isolated singularity (as $\dim R$=1). So, $\cm(R)=\cm_0(R)$. Clearly, $\e^0(\cm(R),\X)\subseteq \e^1(\cm(R),\X)$. Since $\widehat R$ is reduced, so $\overline R$ is a finite $R$-module, hence $\c(R)=(R:\overline R)$ contains a non-zero-divisor of $R$. Moreover, $\c(R)\cong \Hom_R(\overline R,R)$ is the $R$-dual of the finite $R$-module $\overline R$, so $\c(R)$ is a second syzygy module over $R$. So, $\c(R)\cong F\oplus  \syz^2_R M $, for some finite free $R$-module $F$, where $\syz^2_R M$ denotes the second-syzygy in some minimal free resolution for some $R$-module $M$. Writing $N=\syz_R M$, we see $N\in \cm(R)$ and $\c(R)\cong F\oplus \syz_R N\in \syz \cm(R)$. So, $\syz_R \c(R)\in \syz^2\cm(R)\subseteq \X$. So we have, $\e^1(\cm(R),\X)\subseteq \ann_R \Ext^2_R(N,\syz_R \c(R))$, and we also have the following inclusions and equalities $$ \ann_R \Ext^2_R(N,\syz_R \c(R))=\ann_R \Ext^1_R(F\oplus \syz_R N,\syz_R \c(R))=\ann_R\Ext^1_R(\c(R),\syz_R \c(R))\overset{(1)}=\tr_R(\c(R))\overset{(2)}=\c(R)$$ where equality (1) holds by Theorem \ref{traceann} since $\c(R)$ is an ideal of $R$ containing a non-zero-divisor (as the completion of $R$ is reduced, see the last sentence of the discussion in \ref{conductor}); and equality (2) holds by the discussion in \ref{trcon}.  
This shows $\e^0(\cm(R),\X)\subseteq \e^1(\cm(R),\X)\subseteq \c(R)$.   

Similarly, $\syz_R \Tr \c(R)\in \cm(R)\subseteq \Y$, so we get  
$$\t_0(\cm(R),\Y)\subseteq \ann_R \Tor^R_1( N,\syz_R \Tr \c(R))=\ann_R \Tor^R_2(N,\Tr \c(R))=\ann_R \Tor^R_1(F\oplus \syz_R N,\Tr \c(R))$$ 
and
$\ann_R \Tor^R_1(F\oplus \syz_R N,\Tr \c(R))=\ann_R \Tor^R_1(\c(R),\Tr \c(R))\overset{(3)}=\tr_R(\c(R))=\c(R)$, where (3) again follows by Theorem \ref{traceann}.  This shows $\t_0(\cm(R),\Y)\subseteq \c(R)$.   

So now it is enough to prove that $\c(R) \subseteq \e^0(\cm(R),\X) \cap \t_0(\cm(R),\Y)$. In fact, we will observe that $\c(R) \subseteq \e^0(\cm(R),\Mod R) \cap \t_0(\cm(R),\Mod R)$. By Lemma \ref{lemm}, it is enough to observe that $\c(R)\subseteq \e^0(M,\Mod R)$ for every $M\in \cm(R)$.  Since $\widehat R$ is reduced and one-dimensional, and if $M\in \cm(R)$, then $\widehat M\in \cm(\widehat R)$, so for every $i\ge 1$ and every $N\in \mod R$, we get by \cite[Proposition 3.1]{wa} that $0=\c(\widehat R)\Ext^i_{\widehat R}(\widehat M, \widehat N)=\c(\widehat R)\widehat{\Ext^i_R(M,N)}$. Since $\widehat{\c(R)}\subseteq \c(\widehat R)$ by the discussion preceding this result, so we get $\widehat{\c(R)}\widehat{\Ext^i_R(M,N)}=0$. Hence, $\widehat{\c(R)\Ext^i_R(M,N)}=0$, so $\c(R)\Ext^i_R(M,N)=0$. Since this is true for any $N\in \mod R$, so in particular, take the first syzygy $\syz_R M$ in a resolution of $M$ by finite free $R$-modules, so that $\syz_R M\in \mod R$. Hence, $\c(R)\Ext^1_R(M,\syz_R M)=0$, so $\c(R)\subseteq \ann_R \Ext^1_R(M,\syz_R M)=\e^0(M,\Mod R)=\t_0(M,\Mod R)$ by Lemma \ref{lemm}. 

To conclude, we have shown $\c(R)\subseteq \e^0(\cm(R),\Mod R)\subseteq \e^0(\cm(R),\X)\subseteq \e^1(\cm(R),\X)\subseteq \c(R)$, and  $\c(R)\subseteq \t_0(\cm(R),\Mod R)\subseteq \t_0(\cm(R),\Y)\subseteq \c(R)$. This gives all the equalities as claimed in the Proposition. The last part of the proposition follows by taking $\X=\Y=\cm(R)$.  
\end{proof}  

As our second application of Theorem \ref{traceann}, we give an alternative proof of one of the main Theorems of \cite{curve}, namely \cite[Theorem 4.3 and 5.10]{curve}

\begin{prop}\label{cacon} Let $(R,\m)$ be a local Gorenstein ring of dimension one whose completion is reduced. Then, $\ca(R)=\ca^n(R)=\c(R)$ for every $n\ge 2$.   
\end{prop}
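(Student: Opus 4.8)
The plan is to leverage Proposition \ref{41} together with the Gorenstein hypothesis, which forces the syzygy condition in the statement of Proposition \ref{41} to be automatic for high syzygies. First I would recall that over a one-dimensional Cohen--Macaulay local ring, $\syz^n_R N \in \cm(R)$ for every $N \in \mod R$ and every $n \ge 1$, so that $\syz^n \mod R \subseteq \cm(R)$ for all $n \ge 1$. Since $R$ is Gorenstein of dimension one, $\cm(R)$ consists precisely of the maximal Cohen--Macaulay modules, and for $M \in \cm(R)$ the module $\syz_R M$ again lies in $\cm(R)$; moreover $\cm(R) = \cm_0(R)$ because the reduced completion makes $R$ an isolated singularity.

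The key computation I would carry out is to rewrite $\ca^n(R) = \e^{n-1}(\mod R, \mod R)$ in terms of Cohen--Macaulay modules for $n \ge 2$. For $X, Y \in \mod R$ and $i \ge n \ge 2$, one has $\Ext^i_R(X, Y) \cong \Ext^{i-1}_R(\syz_R X, Y) \cong \Ext^1_R(\syz^{i-1}_R X, Y)$ by dimension shifting, and $\syz^{i-1}_R X \in \cm(R)$ since $i - 1 \ge 1$. Hence every Ext-module appearing in the definition of $\ca^n(R)$ (for $n \ge 2$) is of the form $\Ext^j_R(U, Y)$ with $U \in \cm(R)$, $j \ge 1$, and $Y \in \mod R \subseteq \Mod R$. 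This shows $\ca^n(R) \supseteq \e^0(\cm(R), \Mod R)$. Conversely, taking $U \in \cm(R)$ and $Y \in \Mod R$, I need $\e^{n-1}(\mod R, \mod R)$ to annihilate $\Ext^i_R(U, Y)$; here I would use that it suffices (via Lemma \ref{lemm} and the remark following it) to control $\Ext^1_R(U, \syz_R U)$ with $\syz_R U \in \mod R$, so that $\ca^n(R) \subseteq \t_0(\cm(R), \mod R) \subseteq \t_0(\cm(R), \Mod R)$ — or, more directly, I would simply sandwich: $\ca^n(R) \subseteq \e^0(\cm(R), \mod R)$ since every $\Ext^i_R(U, N)$ with $U \in \cm(R)$, $N \in \mod R$, $i \ge 1$ can be reindexed as $\Ext^j_R(X, N)$ with $X = \syz^{?}$ suitable and $j \ge n$, using that one may pad with enough syzygies.

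With these containments in place, Proposition \ref{41} applied with $\X = \Y = \Mod R$ (which contain $\syz^2 \cm(R)$ and $\cm(R)$ respectively) gives $\e^0(\cm(R), \Mod R) = \e^1(\cm(R), \Mod R) = \t_0(\cm(R), \Mod R) = \c(R)$. Combining this with the two-sided inclusion $\e^0(\cm(R), \Mod R) \subseteq \ca^n(R) \subseteq \e^0(\cm(R), \mod R) = \c(R)$ (the last equality again from Proposition \ref{41}, since $\mod R$ also contains $\syz^2\cm(R)$), I conclude $\ca^n(R) = \c(R)$ for all $n \ge 2$. Finally, since $\ca(R) = \ca^s(R)$ for all sufficiently large $s$ (the chain stabilizes as noted after the definition of $\ca^n$), and $\ca^s(R) = \c(R)$ for $s \ge 2$, we get $\ca(R) = \c(R)$ as well.

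The main obstacle I anticipate is bookkeeping the reindexing of Ext degrees cleanly: I must verify that for $n \ge 2$ every cohomology module $\Ext^i_R(X, Y)$ with $i \ge n$ and $X, Y \in \mod R$ really is isomorphic to one of the form $\Ext^j_R(U, Y)$ with $U \in \cm(R)$ and $j \ge 1$, and conversely that each $\Ext^j_R(U, N)$ with $U \in \cm(R)$, $j \ge 1$, $N \in \mod R$ can be realized as $\Ext^i_R(X, N)$ with $i \ge n$ by replacing $U$ with a high cosyzygy or by padding $X = \syz^{i-j}$ of something whose $(i-j)$-th syzygy is $U$ up to free summands — here I would appeal to the fact that over a Gorenstein local ring every maximal Cohen--Macaulay module is a syzygy of arbitrarily high order (it is totally reflexive), so such an $X$ exists. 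Ensuring that the degree index $j \ge 1$ can always be pushed above $n-1$ while keeping the first argument maximal Cohen--Macaulay is the one place where Gorensteinness (as opposed to mere one-dimensional Cohen--Macaulayness) is genuinely used, and I would make sure to invoke it explicitly.
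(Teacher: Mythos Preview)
Your argument is correct and uses the same two ingredients as the paper---Proposition \ref{41} and the fact that over a one-dimensional Gorenstein local ring every maximal Cohen--Macaulay module is an $s$-th syzygy for arbitrarily large $s$---but you organize them differently. The paper does not establish the full equality $\ca^n(R)=\e^0(\cm(R),\mod R)$; instead, for the upper bound it picks the single module $\c(R)$, writes $\c(R)\cong F\oplus\syz^s_R M$ for $s$ large (this is exactly the Gorenstein step you isolate in your last paragraph), and then computes directly $\ca(R)=\ca^{s+1}(R)\subseteq\ann_R\Ext^{s+1}_R(M,\syz_R\c(R))=\ann_R\Ext^1_R(\c(R),\syz_R\c(R))=\tr_R(\c(R))=\c(R)$ via Theorem \ref{traceann}. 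The lower bound $\c(R)\subseteq\ca^2(R)$ is obtained, as you do, from $\syz\mod R\subseteq\cm(R)$ together with Proposition \ref{41}. Your route is slightly more conceptual (you identify $\ca^n(R)$ wholesale with a known annihilator ideal), while the paper's is a bit more economical (it only needs to realize one specific module, $\c(R)$, as a high syzygy, and invokes Theorem \ref{traceann} once rather than going back through Proposition \ref{41} for the upper bound).
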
  

\begin{proof}  Pick $s>1$ large enough so that $\ca(R)=\ca^{s+1}(R)$. Since $R$ is Gorenstein and $\c(R)$ is maximal Cohen--Macaulay (as it is an ideal), so by \cite[Construction 12.10]{lw} we observe that $\c(R)\in \syz^s_R \mod R$, hence $\c(R)\cong F \oplus \syz^s_R M$ for some $M\in \mod R$ and free $R$-module $F$. So then, $\ca(R)=\ca^{s+1}(R)\subseteq \ann_R \Ext^{s+1}_R(M,\syz_R \c(R))=\ann_R \Ext^1_R(F\oplus \syz^s_R M, \syz_R \c(R))=\ann_R\Ext^1_R(\c(R),\syz_R \c(R))=\tr_R(\c(R))=\c(R)$, where the last two equalities holds by Theorem \ref{traceann} and discussion \ref{conductor}, \ref{trcon}. So this shows $\ca(R)\subseteq \c(R)$. Now, we also have $\syz \mod R \subseteq \cm(R)$, so $\e^0(\cm(R),\mod R)\subseteq \e^1(\mod R,\mod R)=\ca^2(R)\subseteq \ca^n(R) \subseteq \ca(R)$ for every $n\ge 2$. Hence by Proposition \ref{41} we get $\c(R)\subseteq \e^0(\cm(R),\mod R) \subseteq \ca^2(R)\subseteq \ca^n(R) \subseteq \ca(R)$ for every $n\ge 2$. This combined with $\ca(R) \subseteq \c(R)$ gives the required claim. 
\end{proof}

\end{document}